\theoremstyle{definition}
\newtheorem{defi}{Definition}
\theoremstyle{remark}
\newtheorem{rem}[defi]{Remark}
\theoremstyle{plain}
\newtheorem{thm}[defi]{Theorem}
\newtheorem*{B}{Baire Category Theorem}
\newtheorem{lem}[defi]{Lemma}
\title{Adding machines and open dynamical systems}
\author{F. Ciavattini$^*$, T.H. Steele$^{**}$}
\date{\small{$^*$Scuola di Scienze e Tecnologie, Università degli Studi di Camerino, Italy, e-mail: filippo.ciavattini@studenti.unicam.it}\\
$^{**}$ Department of Mathematics, Weber State University,  Ogden UT,  84408, USA, e-mail: thsteele@weber.edu}
\begin{document}
\maketitle
\begin{abstract}
Let $f:\mathcal{M}\rightarrow\mathcal{M}$ be a continuous map defined on a compact metric space $\mathcal{M}$. An open dynamical system introduces disjoint open balls centered at points in $\mathcal{M}$, and considers the trajectories of points from $\mathcal{M}$, and the balls that they visit first. As the balls in question are allowed to shrink, a point is considered indecisive if its trajectory changes infinitely many times the ball first visited. Here, we let $\mathcal{M}$ be an adding machine, a simple system and a solenoidal system. In each case, we show that the set of points which generate indecisive trajectories is residual.\\

\noindent \textit{Keywords}: Open dynamical system, Adding machine, Solenoidal system, Baire category.

\noindent MSC2020: 37B05, 37B20, 18F60, 54E52.
\end{abstract}

\section{Introduction}\label{intro}
Let $\mathcal{M}$ be a compact metric space and $f:\mathcal{M}\rightarrow\mathcal{M}$ be a continuous function.
Let $\mathbb{N}$ denote the set of positive integers and $\mathbb{N}_0=\mathbb{N}\cup\{0\}$. Take $\mathcal{O}^-(x)$ and $\mathcal{O}^+(x)$ to be, respectively, the backward and forward $f-$orbit of the point $x$; that is, $\mathcal{O}^-(x)=\{f^k(x):k\in\mathbb{Z}\setminus\mathbb{N}_0\}$ and $\mathcal{O}^+(x)=\{f^k(x):k\in\mathbb{N}_0\}$.

We denote by $\omega(x)$ or $\omega(x,f)$ the set of all the accumulation points of $\mathcal{O}^+(x)$, which is
$$\omega(x)=\bigcap_{k\geq 1}\overline{\bigcup_{n\geq k}f^n(x)}.$$
We refer to such sets as $\omega$-limit sets. An infinite $\omega$-limit set maximal with respect to the partial order given by set inclusion is said to be of \emph{genus 1} if it does not contain periodic orbits; otherwise, it is of \emph{genus 2}.

The dynamical system $(\mathcal{M},f)$ is said to be minimal if there is not a proper closed subset $X\subsetneq\mathcal{M}$ such that $f(X)=X$. Going back to Birkhoff, there has been a considerable amount of attention dedicated to minimal systems. Among minimal systems, of particular interest, are adding machines. Topologically, they are Cantor sets.

Let us see how they are defined. Let $\alpha=(j_1,j_2,\dots,j_n,\dots)$ be a sequence of integers greater than or equal to 2. Let $\Delta_\alpha$ be the set of all the sequences $(x_1,x_2,\dots,x_n,\dots)$, where $x_n\in\{0,1,\dots,j_n-1\}$ for every $n\in\mathbb{N}$. We can define a metric $d_\alpha$ on $\Delta_\alpha$ so that
$$d_\alpha((x_1,x_2,\dots,x_n,\dots),(y_1,y_2,\dots,y_n,\dots))=\frac{1}{2^n},$$
whenever $(x_1,x_2,\dots,x_n,\dots)\neq(y_1,y_2,\dots,y_n,\dots)$, and $n$ is the first natural number for which $x_n\neq y_n$, and $d_\alpha((x_1,x_2,\dots,x_n,\dots),(y_1,y_2,\dots,y_n,\dots))=0$, otherwise. Given two elements $(x_1,x_2,\dots,x_n,\dots),(y_1,y_2,\dots,y_n,\dots)\in\Delta_\alpha$, we can define
$$(x_1,x_2,\dots,x_n,\dots)+(y_1,y_2,\dots,y_n,\dots)=(z_1,z_2,\dots,z_n,\dots),$$
where, setting $t_0=0$ and $z_1=x_1+y_1$ mod $j_1$, for $n>1$, we take $z_n=x_n+y_n+t_{n-1}$ mod $j_n$ and $t_n=\lfloor\frac{x_n+y_n+t_{n-1}}{j_n}\rfloor$. Here, $\lfloor x\rfloor$ denotes the greatest integer smaller than or equal to $x$.
We define $f_\alpha:\Delta_\alpha\rightarrow\Delta_\alpha$ by
$$f_\alpha((x_1,x_2,\dots,x_n,\dots))=(x_1,x_2,\dots,x_n,\dots)+(1,0,0,\dots,0,\dots).$$
We will refer to the dynamical system $(\Delta_\alpha,f_\alpha)$ as an $\alpha$-\emph{adic adding machine}.

In \cite{BLOCK2004151}, Block and Keesling showed that any infinite minimal dynamical system in which every point is regularly recurrent is conjugate to an adding machine.

\begin{thm}{[1, Theorem 2.3]}
	Let $\alpha\in(\mathbb{N}\setminus\{1\})^\mathbb{N}$. Let $m_i=\prod_{k=1}^{i}j_i$ for every $i\in\mathbb{N}$. Let $f:\mathcal{M}\rightarrow\mathcal{M}$ be a continuous function on a compact metric space $\mathcal{M}$. Then, $f$ is topologically conjugate to $f_\alpha$ if and only if the following three properties hold:
	\begin{enumerate}
		\item for every $i\in\mathbb{N}$, there is a cover $\mathcal{A}_i$ of $\mathcal{M}$ consisting in $m_i$ pairwise disjoint, non-empty and clopen sets which are cyclically permuted by $f$;
		\item for every $i\in\mathbb{N}$, $\mathcal{A}_{i+1}$ is a partition of $\mathcal{A}_i$, and,
		\item if $V_1\supseteq V_2\supseteq\dotsc\supseteq V_n\supseteq\dots$ is a nested sequence with $V_n\in\mathcal{A}_n$ for every $n\in\mathbb{N}$, then $\bigcap_{n=1}^\infty V_n$ is a single point.
	\end{enumerate}
\end{thm}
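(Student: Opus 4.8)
The statement is a characterization of when a continuous map $f:\mathcal{M}\to\mathcal{M}$ on a compact metric space is topologically conjugate to an $\alpha$-adic adding machine $f_\alpha$. This is an "if and only if" — two directions. Let me think about each.

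**Forward direction (easier):** If $f$ is conjugate to $f_\alpha$, construct the clopen partitions. The natural partitions on $\Delta_\alpha$ are cylinder sets: for each $i$, partition $\Delta_\alpha$ according to the first $i$ coordinates. But wait — those give $m_i = \prod_{k=1}^i j_k$ sets. And $f_\alpha$ acts by "add 1 with carrying" on the first $i$ coordinates, which cyclically permutes these $m_i$ cylinders (it's conjugate to $\mathbb{Z}/m_i\mathbb{Z}$ on the first $i$ coords). Transport via the conjugacy $h:\mathcal{M}\to\Delta_\alpha$. Check: clopen (cylinders are clopen in the product topology with this metric), pairwise disjoint, cover $\mathcal{M}$, cyclically permuted (property 1); refinement as $i$ grows (property 2); nested intersection is a point (property 3, since specifying all coordinates gives a unique point). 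All routine once I name the cylinder sets correctly.

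**Backward direction (the substance):** Given the three properties, build a conjugacy.

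Wait, let me reconsider — actually let me think about what's genuinely hard. The partitions $\mathcal{A}_i$ with the cyclic permutation property let me define, for each point $x$ and each level $i$, which "slot" of the cyclic order $x$ currently occupies. Since $f$ cyclically permutes the $m_i$ sets in $\mathcal{A}_i$, pick the set $A^{(i)}_0$ containing some base point, and index $\mathcal{A}_i = \{A^{(i)}_0, A^{(i)}_1, \ldots, A^{(i)}_{m_i-1}\}$ so that $f(A^{(i)}_k) = A^{(i)}_{k+1 \bmod m_i}$. Then define $\phi_i(x) = k$ iff $x \in A^{(i)}_k$. This gives a continuous (locally constant) map $\mathcal{M} \to \mathbb{Z}/m_i\mathbb{Z}$ that semiconjugates $f$ to $+1$.

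The inverse limit $\varprojlim \mathbb{Z}/m_i\mathbb{Z}$ (with the right bonding maps, which exist by property 2 — the refinement forces $m_i \mid m_{i+1}$ and compatibility) is exactly $\Delta_\alpha$ as a topological group, with $f_\alpha$ being $+1$. So the maps $\phi_i$ assemble into $h = (\phi_i)_i : \mathcal{M} \to \varprojlim \mathbb{Z}/m_i\mathbb{Z} \cong \Delta_\alpha$, automatically a semiconjugacy.

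Need: $h$ is a homeomorphism. Continuity: each $\phi_i$ continuous, so $h$ continuous into the product, lands in the inverse limit. Injectivity: if $h(x) = h(y)$ then $x, y$ lie in the same $A^{(i)}_{k_i}$ for all $i$; these form a nested sequence (by property 2) whose intersection is a single point (property 3), so $x = y$. Surjectivity: given a point of the inverse limit, i.e. a nested sequence $V_1 \supseteq V_2 \supseteq \cdots$ with $V_n \in \mathcal{A}_n$ — wait, I need to know every compatible sequence of indices arises from an actual nested sequence of sets. By property 2 ($\mathcal{A}_{i+1}$ partitions $\mathcal{A}_i$), a compatible index sequence does correspond to a genuine nested sequence of clopen sets, and each is nonempty (property 1), so by compactness the intersection is nonempty — in fact a point by property 3 — giving the preimage. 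Then $h$ is a continuous bijection from a compact space to a Hausdorff space, hence a homeomorphism. Done.

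**The main obstacle:** The delicate point is the bookkeeping that makes the indexing $\{A^{(i)}_0, \ldots, A^{(i)}_{m_i-1}\}$ *consistent across levels* — i.e., choosing the cyclic labelings so that the bonding map $\mathbb{Z}/m_{i+1}\mathbb{Z} \to \mathbb{Z}/m_i\mathbb{Z}$ is reduction mod $m_i$ and matches the partition refinement, and simultaneously identifying $\varprojlim \mathbb{Z}/m_i\mathbb{Z}$ with $\Delta_\alpha$ with $f_\alpha$ = "$+(1,0,0,\ldots)$". Concretely one must verify $m_i \mid m_{i+1}$ follows from property 2 combined with property 1 (counting: $m_i$ sets each split into the same number $j_{i+1}$ of pieces — why the *same* number? because $f$ permutes them cyclically and conjugates the refinements, forcing equal cardinalities), pin down $j_{i+1} = m_{i+1}/m_i$, and confirm the carrying-addition on $\Delta_\alpha$ is precisely odometer addition on the inverse limit. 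Getting the cyclic orders on consecutive levels to align (so that $A^{(i+1)}_k \subseteq A^{(i)}_{k \bmod m_i}$) requires choosing base points and labelings in a compatible way — a finite inductive construction, but the place where care is needed. Once that alignment is fixed, everything else is compactness and the single-point condition.

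\begin{proof}[Proof sketch]
($\Rightarrow$) Suppose $h:\mathcal{M}\to\Delta_\alpha$ is a conjugacy, $h\circ f = f_\alpha\circ h$. For each $i$, let $\mathcal{C}_i$ be the partition of $\Delta_\alpha$ into cylinders determined by the first $i$ coordinates; there are $m_i$ of them, they are clopen, pairwise disjoint, and cover $\Delta_\alpha$. Since $f_\alpha$ adds $(1,0,0,\dots)$ with carrying, its action on the first $i$ coordinates is conjugate to $k\mapsto k+1$ on $\mathbb{Z}/m_i\mathbb{Z}$, so $f_\alpha$ cyclically permutes $\mathcal{C}_i$. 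Setting $\mathcal{A}_i = \{h^{-1}(C): C\in\mathcal{C}_i\}$ gives clopen covers cyclically permuted by $f$ (property 1). Since $\mathcal{C}_{i+1}$ refines $\mathcal{C}_i$, so does $\mathcal{A}_{i+1}$ refine $\mathcal{A}_i$ (property 2). A nested sequence $V_n\in\mathcal{A}_n$ corresponds under $h$ to a nested sequence of cylinders specifying ever longer prefixes, whose intersection is a single point of $\Delta_\alpha$; applying $h^{-1}$ gives property 3.

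($\Leftarrow$) Assume properties 1--3. From property 2 and property 1, the number of sets in $\mathcal{A}_i$ is $m_i$ and each member of $\mathcal{A}_i$ is partitioned by exactly $m_{i+1}/m_i = j_{i+1}$ members of $\mathcal{A}_{i+1}$; in particular $m_i\mid m_{i+1}$. Fix a point $z\in\mathcal{M}$ and, for each $i$, label $\mathcal{A}_i = \{A^{(i)}_0,\dots,A^{(i)}_{m_i-1}\}$ so that $z\in A^{(i)}_0$ and $f(A^{(i)}_k)=A^{(i)}_{k+1\bmod m_i}$; by a finite induction on $i$ we may moreover choose these labelings compatibly, so that $A^{(i+1)}_k\subseteq A^{(i)}_{k\bmod m_i}$ for all $k$. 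Define $\phi_i:\mathcal{M}\to\mathbb{Z}/m_i\mathbb{Z}$ by $\phi_i(x)=k\iff x\in A^{(i)}_k$; each $\phi_i$ is continuous (locally constant) and satisfies $\phi_i\circ f = \phi_i + 1$. Compatibility of labelings means the $\phi_i$ cohere into a single map $h:\mathcal{M}\to\varprojlim\mathbb{Z}/m_i\mathbb{Z}$, and the inverse limit, with the shift-by-$1$ map, is canonically isomorphic as a dynamical system to $(\Delta_\alpha,f_\alpha)$ via the odometer identification of carrying-addition; thus $h\circ f = f_\alpha\circ h$. The map $h$ is continuous. It is injective: $h(x)=h(y)$ forces $x,y$ to lie in a common $A^{(i)}_{k_i}$ for every $i$, a nested sequence whose intersection is, by property 3, a single point. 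It is surjective: any point of $\varprojlim\mathbb{Z}/m_i\mathbb{Z}$ is, via property 2, a compatible sequence of indices and hence corresponds to a nested sequence $V_1\supseteq V_2\supseteq\cdots$ with $V_n\in\mathcal{A}_n$; each $V_n$ is nonempty by property 1, so by compactness $\bigcap_n V_n\neq\emptyset$, and any point of it maps to the given point. A continuous bijection from a compact space to a Hausdorff space is a homeomorphism, so $h$ is the desired conjugacy.
\end{proof}
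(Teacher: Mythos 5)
This statement is quoted verbatim from Block and Keesling \cite{BLOCK2004151} and the paper gives no proof of it, so there is no internal argument to compare yours against; I can only assess your sketch on its own terms. It is the standard inverse-limit/odometer argument and is essentially correct: pull back cylinder partitions for the forward direction; for the converse, read off the level-$i$ index $\phi_i:\mathcal{M}\to\mathbb{Z}/m_i\mathbb{Z}$, assemble these into a map to $\varprojlim\mathbb{Z}/m_i\mathbb{Z}$, identify that inverse limit with $(\Delta_\alpha,f_\alpha)$ via the mixed-radix expansion, and use property 3 for injectivity, property 2 plus compactness for surjectivity, and compactness of $\mathcal{M}$ for the inverse. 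Two small remarks. First, the ``delicate'' compatibility of labelings you flag is in fact automatic: once you fix one base point $z$ and insist that $z\in A^{(i)}_0$ and $f(A^{(i)}_k)\subseteq A^{(i)}_{k+1\bmod m_i}$, the labeling at every level is forced, and $A^{(i+1)}_k\subseteq f^k(A^{(i)}_0)\subseteq A^{(i)}_{k\bmod m_i}$ follows with no further choices, so the ``finite induction'' reduces to this one observation. Second, your counting argument for why every member of $\mathcal{A}_i$ is split into the same number $m_{i+1}/m_i=j_{i+1}$ of members of $\mathcal{A}_{i+1}$ (because $f$ carries the pieces of $A^{(i)}_k$ bijectively onto pieces of $A^{(i)}_{k+1}$) is exactly the point that needs to be said and should not be omitted from a full write-up. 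No gaps beyond the level of detail one expects in a sketch.
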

They were able also to classify adding machines up to topological conjugacy.
\begin{thm}{[1, Corollary 2.8]}
	Let $\alpha=(a_1,a_2,\dots)$ and $\beta=(b_1,b_2,\dots)$ be sequences of integers greater than or equal to 2. Let $M_\alpha$ be a function from the set of prime numbers to the extended natural number defined by
	$$M_\alpha(p)=\sum_{i=1}^\infty n_i,$$
	where $n_i$ is the power of the prime $p$ in the prime factorization of $a_i$.
	
	Then $f_\alpha$ and $f_\beta$ are topologically conjugate if and only if the functions $M_\alpha$ and $M_\beta$ are equal.
\end{thm}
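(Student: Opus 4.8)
The plan is to first reduce the equivalence to a purely arithmetic condition on the sequences of partial products, and then to establish each direction using the structural description in [1, Theorem~2.3]. Write $m_i^\alpha=\prod_{k=1}^i a_k$, $m_j^\beta=\prod_{k=1}^j b_k$, and let $v_p$ denote the $p$-adic valuation, so $v_p(m_i^\alpha)=\sum_{k\le i}v_p(a_k)$ is nondecreasing in $i$ and $M_\alpha(p)=\sup_i v_p(m_i^\alpha)$ in $\mathbb{N}_0\cup\{\infty\}$. The first step is the elementary observation that $M_\alpha=M_\beta$ holds if and only if the two families $(m_i^\alpha)_i$ and $(m_j^\beta)_j$ are mutually cofinal for divisibility, i.e.\ for every $i$ there is $j$ with $m_i^\alpha\mid m_j^\beta$ and for every $j$ there is $i$ with $m_j^\beta\mid m_i^\alpha$; call this condition $(\mathrm D)$. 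Indeed, if $(\mathrm D)$ holds then $v_p(m_i^\alpha)\le v_p(m_j^\beta)\le M_\beta(p)$ for a suitable $j$, so $M_\alpha(p)\le M_\beta(p)$ after taking the supremum over $i$, and symmetrically; conversely, given $i$, for each of the finitely many primes $p$ dividing $m_i^\alpha$ one uses $M_\beta(p)=M_\alpha(p)\ge v_p(m_i^\alpha)$ to choose $j_p$ with $v_p(m_{j_p}^\beta)\ge v_p(m_i^\alpha)$, and $j=\max_p j_p$ works by monotonicity. So it is enough to prove that $f_\alpha$ and $f_\beta$ are topologically conjugate if and only if $(\mathrm D)$ holds.

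For the ``if'' direction I would realise the covers needed to apply [1, Theorem~2.3] directly inside $\Delta_\alpha$. Whenever $N\mid m_i^\alpha$, the mixed-radix evaluation $\pi_N\colon\Delta_\alpha\to\mathbb{Z}/N\mathbb{Z}$, $(x_1,x_2,\dots)\mapsto\sum_{k=1}^i x_k\,m_{k-1}^\alpha\pmod N$ (with $m_0^\alpha=1$), is well defined, continuous and onto, and $\pi_N(f_\alpha(x))=\pi_N(x)+1$ because adding $(1,0,0,\dots)$ with carry is addition of $1$ modulo $m_i^\alpha$; hence $\mathcal A_N:=\{\pi_N^{-1}(c):c\in\mathbb{Z}/N\mathbb{Z}\}$ is a cover of $\Delta_\alpha$ by $N$ non-empty pairwise disjoint clopen sets which $f_\alpha$ permutes as a single $N$-cycle, and $\mathcal A_{N'}$ refines $\mathcal A_N$ whenever $N\mid N'$. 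Assuming $(\mathrm D)$, for each $j$ choose $i$ with $m_j^\beta\mid m_i^\alpha$, so $\mathcal A_{m_j^\beta}$ is defined; then $(\mathcal A_{m_j^\beta})_{j\in\mathbb N}$ satisfies conditions (1) and (2) of [1, Theorem~2.3] for the sequence $\beta$. Condition (3) is exactly where the other half of $(\mathrm D)$ is used: a nested selection $V_j\in\mathcal A_{m_j^\beta}$ prescribes a coherent residue of $x$ modulo each $m_j^\beta$, and since every $m_i^\alpha$ divides some $m_j^\beta$ these residues pin down $x$ modulo every $m_i^\alpha$, hence all coordinates of $x$; compactness gives non-emptiness, so $\bigcap_j V_j$ is a single point. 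By [1, Theorem~2.3], $f_\alpha$ is topologically conjugate to $f_\beta$.

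For the ``only if'' direction, suppose $f_\alpha$ is topologically conjugate to $f_\beta$. Applying [1, Theorem~2.3] to this conjugacy (with $f_\beta$ playing the role of the model) gives, for every $j$, a cover $\mathcal A_j$ of $\Delta_\alpha$ by $m_j^\beta$ non-empty pairwise disjoint clopen sets that $f_\alpha$ permutes as a single $m_j^\beta$-cycle. Fix $j$. Each cell of $\mathcal A_j$ is clopen in $\Delta_\alpha$, hence a finite union of $\alpha$-cylinders, so for $i$ large enough the depth-$i$ cylinder partition $\mathcal C^\alpha_i$ refines $\mathcal A_j$. Identifying the $m_i^\alpha$ cells of $\mathcal C^\alpha_i$ with $\mathbb{Z}/m_i^\alpha\mathbb{Z}$ so that $f_\alpha$ acts as $+1$, the cells of $\mathcal A_j$ become the blocks of a partition of $\mathbb{Z}/m_i^\alpha\mathbb{Z}$ into $k:=m_j^\beta$ non-empty sets on which $+1$ induces a single $k$-cycle; then $+k$ fixes every block, so each block is a union of cosets of the subgroup $k\mathbb{Z}/m_i^\alpha\mathbb{Z}$, of which there are exactly $\gcd(k,m_i^\alpha)$, and since $k$ non-empty blocks are built from these cosets we get $k\le\gcd(k,m_i^\alpha)$, forcing $k\mid m_i^\alpha$, i.e.\ $m_j^\beta\mid m_i^\alpha$. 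Exchanging the roles of $\alpha$ and $\beta$ yields the other half of $(\mathrm D)$, and by the first step $M_\alpha=M_\beta$.

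The verifications that $\pi_N$ is well defined and intertwines the dynamics, and that the depth-$i$ cylinders are cyclically permuted with full period $m_i^\alpha$, are routine. I expect the real content to lie in two places: in the ``if'' direction, checking condition (3) of [1, Theorem~2.3], which is precisely where the cofinality in $(\mathrm D)$ must be exploited; and in the ``only if'' direction, the small combinatorial lemma that an $f$-equivariant clopen partition of an adding machine into $k$ cyclically permuted pieces can exist only when $k$ divides one of the partial products $m_j^\beta$ — the latter being, I think, the main obstacle, though it reduces cleanly to the coset-counting argument above.
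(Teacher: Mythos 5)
The paper does not prove this statement: it is quoted verbatim as a known result of Block and Keesling \cite{BLOCK2004151}, so there is no internal proof to compare against. Judged on its own, your argument is correct and essentially reconstructs the standard route from the cited source: one first shows that $M_\alpha=M_\beta$ is equivalent to the mutual divisibility--cofinality condition on the partial products $m_i^\alpha$, $m_j^\beta$, and then proves that this cofinality is equivalent to conjugacy. Your reduction to condition $(\mathrm D)$ is right (the finiteness of the set of primes dividing a fixed $m_i^\alpha$, together with monotonicity of $v_p(m_j^\beta)$ in $j$, is exactly what makes the converse implication work). The ``if'' direction via the mixed-radix evaluation maps $\pi_N$ is sound, including the verification of condition (3) of [1, Theorem~2.3], which is indeed where the second half of $(\mathrm D)$ enters. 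The ``only if'' direction's coset-counting lemma --- that a clopen partition of $\Delta_\alpha$ into $k$ non-empty cells cyclically permuted by $f_\alpha$ forces $k\mid m_i^\alpha$ once the depth-$i$ cylinders refine the partition --- is correct: $+k$ fixes each block, each block is a union of cosets of $\langle k\rangle\le\mathbb{Z}/m_i^\alpha\mathbb{Z}$, and there are only $\gcd(k,m_i^\alpha)$ such cosets. Two small points you gloss over but which are harmless: you should note that $f_\alpha$ is surjective (it is an invertible translation on the inverse limit group $\varprojlim\mathbb{Z}/m_i^\alpha\mathbb{Z}$), so that the cells of $\mathcal A_N$ are genuinely permuted rather than merely mapped into one another; and when writing a clopen cell as a finite union of cylinders one must pass to a common depth $i$ valid for all $m_j^\beta$ cells simultaneously, which finiteness of the cover provides. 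Neither is a gap.
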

We say that an adding machine is an $\infty$-adic adding machine if $M_\alpha(p)=\infty$ for every prime number $p$. 

The reason why adding machines are well studied is because they arise naturally and frequently in dynamical systems. See for example \cite{MR2475110}. Let $\mathcal{M}$ be a Cantor space or an $n$-manifold and take $C(\mathcal{M},\mathcal{M})$ to be the set of continuous self-maps of $\mathcal{M}$. D'Aniello, Darji and Steele showed there is a residual set $S(\mathcal{M})\subseteq\mathcal{M}\times C(\mathcal{M},\mathcal{M})$ such that $(\omega(x,f),f)$ is an adding machine for every $(x,f)\in S(\mathcal{M})$. They also found that, if $\mathcal{M}$ has the fixed point property, the typical $f\in C(\mathcal{M},\mathcal{M})$ generates uncountably many distinct $\alpha$-adic adding machines for every possible $\alpha$. Moreover, in \cite{MR2593709}, D'Aniello, Humke and Steele proved that the set of $(x,f)\in S(\mathcal{M})$ such that $\omega(x,f)$ is an $\alpha$-adic adding machine is a nowhere dense set with no isolated points, if $\alpha\neq\infty$.

A generalization of the construction for an adding machine leads us to the notion of a simple system and a solenoidal set, and these systems are important because maximal $\omega$-limit sets of genus 1 are example of solenoidal sets. We introduce the concept of a \emph{simple system}, as it is found in \cite{MR1182256}. Let $f:[0,1]\rightarrow[0,1]$ be a continuous $2^\infty$-function with an infinite $\omega$-limit set $\Omega$. Then, there exists a sequence of closed intervals $\{T_k\}_{k=1}^\infty$ such that
\begin{itemize}
	\item for every $k\in\mathbb{N}$, $\{f^i(T_k)\}_{i=1}^{2^k}$ are pairwise disjoint, and $T_k=f^{2^k}(T_k)$;
	\item for every $k\in\mathbb{N}$, $T_{k+1}\cup f^{2^k}(T_{k+1})\subseteq T_k$;
	\item for every $k\in\mathbb{N}$, $\Omega\subseteq\bigcup_{i=1}^{2^k}f^i(T_k)$, and,
	\item for every $i,k\in\mathbb{N}$, $\Omega\cap f^i(T_k)\neq\emptyset$.
\end{itemize}
The set of all the $f^i(T_k)$ forms the \emph{simple system} for $\Omega$ relative to $f$.
Setting
$$K=\bigcap_{k=1}^\infty\bigcup_{i=1}^{2^k}f^i(T_k),$$
let $S$ be the set of components of $K$ with empty interior, and let $Q=\overline{S}$. In \cite{MR1182256} it is shown that
\begin{enumerate}
	\item $(K,f)$ is semi-conjugate to a $(2,2,\dots)$-adic adding machine;
	\item $f\vert Q$ is minimal;
	\item if $[a,b]$ is a connected component of $K$, then $\emptyset\neq[a,b]\cap Q\subseteq\{a,b\}$, and,
	\item there exists a countable $C\subseteq K$ such that $\Omega=Q\dot{\cup}C$, where $\dot{\cup}$ denotes a disjoint union. Moreover, $C$ consist of isolated points in $Q\dot{\cup}C$.
\end{enumerate}
This construction can be generalize to obtain \emph{solenoidal sets} (see \cite{blokh1995spectral}). Let $f:[0,1]\rightarrow[0,1]$ be continuous and let $\Omega$ be a maximal and infinite $\omega$-limit set containing no periodic points. 
Then, there exists a sequence of closed intervals $\{T_k\}_{k=1}^\infty$ and a sequence of natural numbers $(m_1,m_2,\dots)$, where $m_{i+1}$ is a multiple of $m_i$, such that
\begin{itemize}
	\item for every $k\in\mathbb{N}$, $\{f^i(T_k)\}_{i=1}^{m_k}$ are pairwise disjoint, and $T_k=f^{m_k}(T_k)$;
	\item for every $k\in\mathbb{N}$, $T_{k+1}\cup f^{m_k}(T_{k+1})\cup\dots\cup f^{(\frac{m_{k+1}}{m_k}-1)m_k}(T_{k+1})\subseteq T_k$;
	\item for every $k\in\mathbb{N}$, $\Omega\subseteq\bigcup_{i=1}^{m_k}f^i(T_k)$, and,
	\item for every $i,k\in\mathbb{N}$, $\Omega\cap f^i(T_k)\neq\emptyset$.
\end{itemize}
Let
$$K=\bigcap_{k=1}^\infty\bigcup_{i=1}^{m_k}f^i(T_k).$$
Any closed and invariant subset of $K$ is said to be a solenoidal set. 
Let $S$ be the set of components of $K$ with empty interior, and let $Q=\overline{S}$. An analogous result to the previous one can be proved:
\begin{enumerate}
	\item $(K,f)$ is semi-conjugate to a $(m_1,\frac{m_2}{m_1},\frac{m_3}{m_2}\dots)$-adic adding machine;
	\item $f\vert Q$ is minimal;
	\item if $[a,b]$ is a connected component of $K$, then $\emptyset\neq[a,b]\cap Q\subseteq\{a,b\}$, and,
	\item there exists a countable $C\subseteq K$ such that $\Omega=Q\dot{\cup}C$. Moreover, $C$ consist of isolated points in $Q\dot{\cup}C$.
\end{enumerate}

In \cite{dfr}, Della Corte, Farotti and Rodríguez studied the property of complete indecisiveness for competing holes in open dynamical systems. They considered a collection of $N$ shrinking balls $\{\{B^i_n\}_{i=1}^N\}_{n=1}^\infty$ centered at $\{p_i\}_{i=1}^N$ and with radii $\{\{\rho^i_n\}_{n=1}^\infty\}_{i=1}^N$. The authors found out that, under minor hypothesises over the set of centers $\{p_i\}_{i=1}^N$, the typical trajectory changes the escaping hole infinitely many times.

In this paper, we provide some examples in which this happens for the generic element $\{p_i\}_{i=1}^N\in\mathcal{M}^N$. We will refer to such dynamical system as \emph{strachite}. In fact, we will prove that adding machines and the subsystems $(\Omega,f)$ mentioned above are all strachite. Namely, we will see that
\begin{itemize}
	\item adding machines satisfy the hypothesises in \cite{dfr}, but we will see that the set of the possible centers is larger than the one found in that paper;
	\item even though $(\Omega,f)$ is neither transitive nor a homeomorphism in the general case of a simple or solenoidal system, we are able to show $(\Omega,f)$ is strachite, and,
	\item for an interval function, it is enough to have a transitive function in order to have the strachite property.
\end{itemize}

\section{Notations and definitions}

In the following, we will consider $N$-tuples $(p_1,\dots,p_N)\in\mathcal{M}^N$ of pairwise distinct points.
For every $i=1,\dots,N$, let $\{\rho^i_n\}_{n=1}^\infty$ be a decreasing sequence of positive real numbers such that$$\lim_{n\longrightarrow\infty}\rho^i_n=0\text{.}$$

The open ball with center $p_i$ and radius $\rho^i_n$ will be denoted by $B^i_n$; that is, $B^i_n=B_{\rho^i_n}(p_i)$. We refer to the sets $B^i_n$ as holes.

Re-introducing the notation used in \cite{dfr}, by \emph{open dynamical system} we mean a 4-tuple $(\mathcal{M},d,f,\mathfrak{S})$, where $\mathfrak{S}:=\{p_i, \{\rho^i_n\},  i\in \mathbb{N}, n\in\mathbb{N}\}$.

\begin{defi}\label{defT11}
	
	In the following, we adopt the convention $\min \emptyset = +\infty$.
	
	\begin{itemize}
		
		\item A point $x\in \mathcal{M}$ belongs to $\mathfrak{T}(i)$ if 
		the following inequality holds for infinitely many natural numbers $n$:
		$$\min\{k\in\mathbb{N}_0:f^k(x)\in B^{i}_n\}<\min\{k\in\mathbb{N}_0:f^k(x)\in B^j_n\  \text{ for all } j\ne i\}.$$ 
		 
		 \item We say that $x$ belongs to $\mathfrak{T}$ if
		 \begin{equation*}\label{def_compl_indec}
		 	x\in \bigcap_{j=1}^\infty \mathfrak{T}(j).
		 \end{equation*}
	\end{itemize}
	
\end{defi}

\begin{rem}
	For any $i$, a point $x$ in $\mathfrak{T}$ will first visit the hole $B^i_n$, for infinitely many natural numbers $n$.
\end{rem}

Given that we will be working within complete metric spaces, we will be able to make good use of the Baire category theorem.

\begin{defi}
	\noindent A subset $T$ of a topological space $X$ is said to be of \emph{ the first category} if there exists a countable family $\{T_i\}_{i\in\mathbb{N}}$ of nowhere dense subsets of $X$ such that $T=\bigcup_{i\in\mathbb{N}} T_i$. We say that a set $R\subseteq X$ is \emph{residual} if $X\setminus R$ is of the first category. An element of a residual subset of $X$ is called either a \emph{typical} or a \emph{generic} element of $X$.
\end{defi}

\begin{B}\label{th_baire}
	If $X$ is a complete metric space, and $T$ is a first category subset of $X$, then $X\setminus T$ is dense.
\end{B}
In \cite{dfr} the authors define an open dynamical system $(\mathcal{M},d,f,\mathfrak{S}:=\{p_i, \{\rho^i_n\},  i=1,\dots,N,~n\in\mathbb{N}\})$ as \emph{completely indecisive} if the set $\mathfrak{T}$ is residual.

\begin{defi}
	We say a dynamical system $(\mathcal{M},f)$ is \emph{strachite} if, for every natural number $N\in\mathbb{N}$, there exist a residual set $\mathcal{R}\subseteq \mathcal{M}^N$ such that the open dynamical system $(\mathcal{M},d,f,\{p_i, \{\rho^i_n\},  i=1,\dots,N,~n\in\mathbb{N}\})$ is completely indecisive for every $\{p_i\}_{i=1}^N\in\mathcal{R}$ and for every collection of positive decreasing and infinitesimal sequences $\{\{\rho^i_n\}_{n=1}^\infty\}_{i=1}^N$.
\end{defi}

\section{Results}

\begin{thm}
	Let $f_\alpha:\Delta_\alpha\rightarrow\Delta_\alpha$ be an $\alpha$-adic adding machine for some $\alpha\in(\mathbb{N}\setminus\{1\})^\mathbb{N}$. Then, $(\Delta_\alpha,f_\alpha)$ is strachite.
\end{thm}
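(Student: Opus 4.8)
The plan is to exploit that $(\Delta_\alpha,f_\alpha)$ is the translation $x\mapsto x+\mathbf{1}$, $\mathbf{1}=(1,0,0,\dots)$, on the compact abelian group $\mathbb{Z}_\alpha=\varprojlim\mathbb{Z}/m_i\mathbb{Z}$, $m_i=\prod_{k\le i}j_k$. Two elementary facts drive everything. First, every open ball is a coset of a clopen subgroup: writing $\iota_\ell\colon\mathbb{Z}_\alpha\to\mathbb{Z}/m_\ell\mathbb{Z}$ for the canonical projection, realised as the integer $\iota_\ell(x)=\sum_{k=1}^{\ell}x_k m_{k-1}\in\{0,\dots,m_\ell-1\}$, one has $B^i_n=p_i+\ker\iota_{\ell(i,n)}$ for a suitable level $\ell(i,n)$, and $\ell(i,n)\to\infty$ because $\rho^i_n\to0$. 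Second, since $\mathbf{1}$ generates $\mathbb{Z}/m_\ell\mathbb{Z}$, the first visiting time to $B^i_n$ is
$$\tau_i(x,n):=\min\{k\in\mathbb{N}_0:f_\alpha^k(x)\in B^i_n\}=\iota_{\ell(i,n)}(p_i-x)\in\{0,\dots,m_{\ell(i,n)}-1\}.$$

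Next I would set up the Baire skeleton. Let $U_{i,n}$ be the set of $x$ with $\tau_i(x,n)<\tau_j(x,n)$ for all $j\ne i$; since each $\tau_i(\cdot,n)$ is locally constant, $U_{i,n}$ is clopen, and $\mathfrak{T}(i)=\bigcap_{m}\bigcup_{n\ge m}U_{i,n}$, $\mathfrak{T}=\bigcap_{i=1}^{N}\mathfrak{T}(i)$. It therefore suffices to find a residual $\mathcal{R}\subseteq\Delta_\alpha^{N}$ such that, for every $p=(p_1,\dots,p_N)\in\mathcal{R}$, every admissible collection of radii, every $i$ and every $m$, the open set $\bigcup_{n\ge m}U_{i,n}$ is dense; the Baire Category Theorem and the finiteness of the intersection over $i$ then make $\mathfrak{T}$ residual, i.e.\ the system completely indecisive, and since $\mathcal{R}$ is independent of the radii this is exactly what \emph{strachite} asks. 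I would take
$$\mathcal{R}=\{(p_1,\dots,p_N):\ p_i-p_j\notin\mathbb{Z}\mathbf{1}\ \text{for all }i\ne j\},$$
the tuples whose two-sided orbits $\mathcal{O}^-(p_i)\cup\mathcal{O}^+(p_i)$ are pairwise disjoint; its complement $\bigcup_{i\ne j}\bigcup_{k\in\mathbb{Z}}\{p:p_i=p_j+k\mathbf{1}\}$ is a countable union of closed sets with empty interior, hence meager.

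The core is the density of $\bigcup_{n\ge m}U_{i,n}$, which I would prove by an explicit choice of point. Fix $p\in\mathcal{R}$, radii, $i$, $m$, and a cylinder $Z$, say a coset of $\ker\iota_r$; pick $c_Z\in Z$, let $b_i=\iota_r(p_i-c_Z)\in\{0,\dots,m_r-1\}$, let $z$ be the unique eventually-zero sequence with $\iota_r(z)=b_i$, and put $x^{\ast}=p_i-z$. Then $\iota_r(x^{\ast})=\iota_r(c_Z)$, so $x^{\ast}\in Z$; and for any $n$ with $\ell(i,n),\ell(j,n)\ge r$ one computes $\tau_i(x^{\ast},n)=\iota_{\ell(i,n)}(z)=b_i$ and $\tau_j(x^{\ast},n)=\bigl(\iota_{\ell(j,n)}(p_j-p_i)+b_i\bigr)\bmod m_{\ell(j,n)}$. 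Because $p_i-p_j\notin\mathbb{Z}\mathbf{1}$, neither $p_i-p_j$ nor $p_j-p_i$ is eventually $0$; using that $\iota_\ell(y)\ge m_r$ exactly when $y$ has a nonzero coordinate at a position in $\{r+1,\dots,\ell\}$, one gets $1\le\iota_\ell(p_j-p_i)\le m_\ell-m_r$ for all large $\ell$. Hence, once $\ell(j,n)$ is large for every $j\ne i$, the reduction modulo $m_{\ell(j,n)}$ is vacuous and $\tau_j(x^{\ast},n)\ge 1+b_i>b_i=\tau_i(x^{\ast},n)$. As every level $\ell(j,n)\to\infty$, such an $n\ge m$ exists, so $x^{\ast}\in Z\cap U_{i,n}$; thus $\bigcup_{n\ge m}U_{i,n}$ meets every cylinder and is dense.

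The step I expect to be the genuine obstacle is isolating the correct hypothesis on the centres and matching it with the "no wrap-around" bound. The weaker-looking requirement that the $p_i-p_j$ merely have infinite support fails: if $p_i=f_\alpha^{k}(p_j)$ with $k\in\mathbb{Z}\setminus\{0\}$ (so $p_i-p_j=k\mathbf{1}$), then taking $\rho^i_n=\rho^j_n$ forces $U_{i,n}$ to be a fixed union of $|k|$ shrinking balls, which never becomes dense — so disjointness of the full orbits is in fact necessary, and the resulting $\mathcal{R}$ is larger than the admissible set obtained from \cite{dfr}. The remaining ingredients — the ball/coset identification, the arithmetic of $\iota_\ell$ with its carries, and the bookkeeping forced by the fact that the competing holes $B^j_n$ generally sit at different levels at a given stage — are routine.
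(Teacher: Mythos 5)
Your proof is correct, but it takes a genuinely different route from the paper. The paper's proof is a two-line reduction: it invokes the main result of \cite{dfr}, observing that $f_\alpha$ is a transitive homeomorphism (transitive because adding machines are minimal, a homeomorphism because it is a surjective isometry), so the complete-indecisiveness machinery of that reference applies directly. You instead give a self-contained argument exploiting the group structure of $\Delta_\alpha$: balls are cosets of the clopen subgroups $\ker\iota_\ell$, the hitting times $\tau_i(x,n)=\iota_{\ell(i,n)}(p_i-x)$ are computed exactly, and density of $\bigcup_{n\ge m}U_{i,n}$ is established by an explicit point $x^\ast=p_i-z$ in each cylinder together with the ``no wrap-around'' estimate $m_r\le\iota_\ell(p_j-p_i)\le m_\ell-m_r$. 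Your calculations check out (in particular, the identity $\iota_\ell(-y)=m_\ell-\iota_\ell(y)$ for large $\ell$ when $y$ is not eventually zero, which is what rules out the modular reduction). What your approach buys is an explicit and sharp description of the admissible set of centres, namely the tuples with $p_i-p_j\notin\mathbb{Z}\mathbf{1}$ for $i\ne j$, together with a concrete counterexample showing this condition cannot be weakened; this substantiates the claim made in the paper's introduction that for adding machines the set of possible centres is larger than the one obtained from \cite{dfr}, a point the paper's own proof does not actually establish. What the paper's approach buys is brevity and uniformity: the same citation handles any transitive homeomorphism, at the cost of importing a hypothesis on the centres (dense, pairwise disjoint backward orbits) that is residual rather than explicitly characterized.
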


\begin{proof}
	By \cite{dfr}, it is enough to show that $f_\alpha$ is a transitive homeomorphism. The fact that it is transitive easily follows from the fact that adding machines are minimal systems.
	From the transitivity, it also follows that $f_\alpha$ is surjective. Then, to see that it is a homeomorphism, it suffices to notice that $f_\alpha$ is an isometry.
\end{proof}

\begin{rem}
	In this particular case, since $(\Delta_\alpha,f_\alpha^{-1})$ is minimal, too, the set of the points with a dense backward orbit is not just residual, but it also coincides with the whole space $\Delta_\alpha$. Therefore, in this particular case, the result is slightly stronger than the one described in \cite{dfr}. In general, with a transitive homeomorphism, the set of points with a dense backward orbit is just residual.
\end{rem}

Let us now consider interval maps. Let $f:[0,1]\rightarrow[0,1]$ be a continuous function with a maximal $\omega$-limit set $\omega(x)$ of genus 1. 
We will see that $f$ restricted to $\omega(x)$ is strachite. We proceed by proving some lemmas that will be used to prove the main theorem. Let $K$, $S$, $Q$ and $C$ be as defined in Section \ref{intro} .

\begin{lem}
	$Q\setminus S$ is countable. So, $S$ is residual both in $Q$ and in $Q\dot{\cup}C$.
\end{lem}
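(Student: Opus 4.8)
The plan is to treat the two assertions separately: only the countability of $Q\setminus S$ carries real content, and the residuality statements then follow by a soft Baire-category argument. For the countability I would start from the observation that $Q=\overline S$ is a closed subset of the closed set $K$, so every $q\in Q$ lies in $K$; write $J_q$ for the connected component of $K$ containing $q$. Being a connected closed subset of $[0,1]$, $J_q$ is a closed interval, possibly degenerate. If $J_q=\{q\}$, then $\{q\}$ is a component of $K$ with empty interior, hence $\{q\}\in S$ by definition of $S$, i.e. $q\in S$. Therefore any $q\in Q\setminus S$ lies in a nondegenerate component $J_q=[a,b]$ of $K$, and the third property in the corresponding list of Section~\ref{intro} gives $\emptyset\neq[a,b]\cap Q\subseteq\{a,b\}$; since $q\in[a,b]\cap Q$, this forces $q\in\{a,b\}$. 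Thus $Q\setminus S$ is contained in the set of endpoints of the nondegenerate components of $K$, and since distinct such components are pairwise disjoint subintervals of $[0,1]$ of positive length, there are at most countably many of them; hence $Q\setminus S$ is countable.

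For the residuality in $Q$ I would check that $Q$ is a nonempty perfect compact metric space and then use that a countable subset of such a space is of the first category. Compactness is immediate, since $Q$ is closed in $[0,1]$. That $Q$ is nonempty and infinite follows from the minimality of $f\vert Q$ together with the genus-$1$ hypothesis: a nonempty finite $Q$ would be a periodic orbit contained in $\Omega$, impossible for a set of genus $1$, while $Q=\emptyset$ would make $\Omega=C$ a countable compact metric space, hence finite, contradicting that $\Omega$ is infinite. An infinite compact minimal system has no isolated points (the forward orbit closure of each point is the whole space, so an isolated point would have to be periodic), so $Q$ is perfect; consequently every singleton in $Q$ is nowhere dense, the countable set $Q\setminus S$ is of the first category in $Q$, and $S$ is residual in $Q$ by the Baire Category Theorem.

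For the residuality in $Q\dot{\cup}C=\Omega$ the key remark is that $\Omega\setminus S=(Q\setminus S)\cup C$ is again countable, so the same argument works provided $\Omega$ has no isolated points. This is where I expect the only genuine subtlety: one must invoke that a maximal $\omega$-limit set of genus $1$ of an interval map is perfect — a Cantor set — which in the notation of Section~\ref{intro} means $C=\emptyset$, so that residuality of $S$ in $\Omega=Q$ is precisely what the previous paragraph established. Once the structure theorems of Section~\ref{intro} and the perfectness of $\Omega$ are granted, the whole statement reduces to the counting argument of the first paragraph, the categorical conclusions being routine.
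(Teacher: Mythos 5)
Your countability argument is essentially the paper's: both reduce $Q\setminus S$ to the set of endpoints of the nondegenerate components of $K$ (the paper phrases this as boundary points of open intervals in $\mathrm{Int}(K)$, you phrase it via property 3 of the structure theorem quoted in Section \ref{intro}), and then use that a pairwise disjoint family of nondegenerate subintervals of $[0,1]$ is countable. The paper stops there and treats both residuality claims as immediate; you attempt to justify them, which is where problems appear. A small one first: ``a countable compact metric space, hence finite'' is false ($\{0\}\cup\{1/n:n\in\mathbb{N}\}$ is a counterexample); the correct route to $Q\neq\emptyset$ is that $C$ consists of isolated points of $\Omega$, so $Q=\emptyset$ would make $\Omega$ discrete and compact, hence finite, contradicting that $\Omega$ is infinite. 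With that repaired, your argument that $S$ is residual in $Q$ (an infinite compact minimal system is perfect, and a countable subset of a perfect compact metric space is of the first category) is sound, and is more than the paper itself records.

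The genuine gap is in your last paragraph. You correctly identify that residuality of $S$ in $Q\dot{\cup}C$ requires the countable set $(Q\setminus S)\cup C$ to be of the first category in $\Omega$, and that isolated points are the obstruction: an isolated point of $\Omega$ is an open singleton, hence not nowhere dense and not of the first category in the compact (hence Baire) space $\Omega$. But you then dispose of the obstruction by asserting that a maximal genus-1 $\omega$-limit set is perfect, i.e.\ that $C=\emptyset$. That assertion is not proved, and it conflicts with the structure theorem quoted in Section \ref{intro}, which explicitly introduces a countable set $C$ of isolated points of $Q\dot{\cup}C$ and gives no indication that it vanishes; in Blokh's theory $C$ can indeed be nonempty. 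So either you must supply a proof that $C=\emptyset$ in this setting, or the second residuality claim must be argued (or qualified) differently. You have in fact located a point the paper glosses over --- its own proof establishes only the countability and is silent on why residuality in $Q\dot{\cup}C$ follows --- but an appeal to an unproved perfectness claim does not close it.
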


\begin{proof}
	If $y\in Q\setminus S$, then there exists an open interval $J\subseteq$Int$(K)$ such that $y\in\partial J$. Thus, it is enough to show that Int$(K)$ consists of at most countably many disjoint intervals. Towards a contradiction, assume Int$(K)=\bigcup_{i\in\mathfrak{I}}J_i$, where the $J_i$ are pairwise disjoint open intervals and $\mathfrak{I}$ is an uncountable set of indices. Then, there exists a natural number $n\in\mathbb{N}$ such that $$\frac{1}{n+1}<\lambda(J_i)\leq\frac{1}{n}$$
	for infinitely many $i$, where $\lambda(J_i)$ denotes the length of the interval $J_i$. But, this would imply Int$(K)\nsubseteq[0,1]$.
\end{proof}

\begin{lem}
	For every $s\in S$, $\overline{\mathcal{O}^-(s)}=Q$.
\end{lem}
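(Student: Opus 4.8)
The plan is to combine the semiconjugacy of $(K,f)$ onto the adding machine with the minimality of $f\vert Q$. Write $\beta=(m_1,\tfrac{m_2}{m_1},\tfrac{m_3}{m_2},\dots)$ and let $\pi\colon K\to\Delta_\beta$ be the semiconjugacy provided by the construction in Section~\ref{intro}, so $\pi\circ f=f_\beta\circ\pi$. This $\pi$ collapses each connected component of $K$ to a point and separates distinct components, so its fibres are exactly the connected components of $K$; in particular, since $s\in S$ is a one-point component, $\pi^{-1}(\pi(s))=\{s\}$. Set $w=\pi(s)$. The relevant backward orbit is the one taken inside the minimal system $f\vert Q$, so $\mathcal{O}^-(s)\subseteq Q$ and the inclusion $\overline{\mathcal{O}^-(s)}\subseteq Q$ is automatic; only the reverse inclusion requires work.

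The first step is to show $s\in\overline{\mathcal{O}^-(s)}$. Fix $k\in\mathbb{N}$ and put $v_k=f_\beta^{-k}(w)$, which is legitimate because $f_\beta$ is a homeomorphism. From $\pi\circ f^{k}=f_\beta^{k}\circ\pi$ we get $f^{k}\!\left(\pi^{-1}(v_k)\right)\subseteq\pi^{-1}(w)=\{s\}$, so the entire component $\pi^{-1}(v_k)$ is sent onto $s$ by $f^{k}$; by the third property recalled in Section~\ref{intro} this component meets $Q$, so choosing $y_k\in\pi^{-1}(v_k)\cap Q$ gives a point of $Q$ with $f^{k}(y_k)=s$, i.e. $y_k\in\mathcal{O}^-(s)$, and with $\pi(y_k)=v_k$. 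Hence $\pi(\mathcal{O}^-(s))\supseteq\{v_k:k\in\mathbb{N}\}$, and this set is dense in $\Delta_\beta$ because $(\Delta_\beta,f_\beta^{-1})$ is again minimal, so every $f_\beta$-backward orbit is dense. By continuity of $\pi$ and compactness, $\pi\!\left(\overline{\mathcal{O}^-(s)}\right)=\overline{\pi(\mathcal{O}^-(s))}=\Delta_\beta$, so some $y\in\overline{\mathcal{O}^-(s)}\subseteq Q$ satisfies $\pi(y)=w$; since $\pi^{-1}(w)\cap Q=\{s\}$, this forces $y=s$.

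To conclude, set $Y=\overline{\mathcal{O}^-(s)}$, which is closed with $Y\subseteq Q$ and $s\in Y$. Moreover $f(\mathcal{O}^-(s))\subseteq\mathcal{O}^-(s)\cup\{s\}$ (applying $f$ to a point whose $k$-th iterate is $s$ either lands in $\mathcal{O}^-(s)$ or equals $s$, and $f(Q)=Q$ by minimality), so $f(Y)\subseteq Y$ by continuity. Hence $\mathcal{O}^+(s)\subseteq Y$ and therefore $\overline{\mathcal{O}^+(s)}\subseteq Y$; but $f\vert Q$ is minimal, so $\overline{\mathcal{O}^+(s)}=Q$, whence $Q\subseteq Y\subseteq Q$ and $\overline{\mathcal{O}^-(s)}=Q$.

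The delicate point is placing the \emph{exact} point $s$ in the closure of its own backward orbit: minimality of $f\vert Q$ only yields approximate returns of orbits, and since $\pi$ need not be open, density of $\pi(\mathcal{O}^-(s))$ in $\Delta_\beta$ does not translate directly into density of $\mathcal{O}^-(s)$ in $Q$. What repairs this is that the fibre over $w=\pi(s)$ is the singleton $\{s\}$ while the fibres over the dense backward $f_\beta$-orbit of $w$ are genuine components of $K$, each forced to meet $Q$; once $s\in\overline{\mathcal{O}^-(s)}$ is secured, the rest is the routine ``closed, invariant, nonempty inside a minimal system, hence everything'' argument.
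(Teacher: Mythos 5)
Your proof is correct, but it takes a genuinely different route from the paper's. The paper argues directly: it approximates an arbitrary $q\in Q$ by points $s_n\in S$, observes that each $s_n$ is the nested intersection of the periodic intervals $f^k(T_j)$ containing it, and shows every such interval meets $\mathcal{O}^-(s)$ by cycling $f^k(T_j)$ forward onto the interval $f^h(T_j)$ that contains $s$ (using $f^{m_j}(T_j)=T_j$); density of $\mathcal{O}^-(s)$ at every point of $Q$ then falls out at once, with no appeal to minimality. You instead factor through the semiconjugacy $\pi$ onto the adding machine: the singleton fibre over $\pi(s)$ together with density of the backward $f_\beta$-orbit forces $s\in\overline{\mathcal{O}^-(s)}$, after which the standard ``nonempty closed forward-invariant subset of a minimal system'' argument finishes the job. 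Your version is conceptually tidier and reuses the listed properties (minimality of $f\vert Q$, minimality of adding machines), but it leans on one fact the paper's summary of the construction does not explicitly record, namely that the fibres of $\pi$ are exactly the connected components of $K$ (so that $\pi^{-1}(\pi(s))=\{s\}$ for $s\in S$ and $f^k(\pi^{-1}(f_\beta^{-k}(\pi(s))))=\{s\}$). That fact is true and standard for this construction --- the fibres are nested intersections $\bigcap_k f^{i_k}(T_k)$ --- but you should state that you are importing it, since ``semi-conjugate'' alone does not give it; the paper's direct manipulation of the $f^k(T_j)$ is essentially the same periodicity input, just used without naming $\pi$. Your closing steps ($\pi(\overline{\mathcal{O}^-(s)})=\overline{\pi(\mathcal{O}^-(s))}$ by compactness, $f(\overline{\mathcal{O}^-(s)})\subseteq\overline{\mathcal{O}^-(s)}$ once $s$ lies in that closure) are all sound.
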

\begin{proof}
	Let $s\in S$. Given $q\in Q$, there exists a sequence $s_n\in S$ such that $s_n\xrightarrow{n\rightarrow\infty}q$. We also know that, for every $n\in\mathbb{N}$ and for every $j\in\mathbb{N}$, there exists a natural number $k\in\mathbb{N}$ such that $s_n\in f^k(I_j)$. Therefore, it is enough to show that, for every $j,k\in\mathbb{N}$, $f^k(I_j)\cap \mathcal{O}^-(s)\neq\emptyset$, because this implies that $s_n\in\overline{\mathcal{O}^-(s)}$, for every $n\in\mathbb{N}$.
	
	For every $j\in\mathbb{N}$, there exists $h\in\mathbb{N}$ such that $s\in f^h(I_j)$. Since $I_j$ is periodic, for every $k\in\mathbb{N}$, there exists $l\in\mathbb{N}$ such that $f^l(f^k(I_j))=f^h(I_j)$. So, for every $j,k\in\mathbb{N}$, there exists $x\in f^k(I_j)$ such that $f^l(x)=s$.
\end{proof}

\begin{lem}\label{disj-orb}
	For any $N\in\mathbb{N}$, there exists a residual set $X\subseteq\mathcal{M}^N$ such that, for every $(p_1,\dots,p_N)\in X$, $\mathcal{O}^-(p_i)\cap\mathcal{O}^-(p_j)=\emptyset$, whenever $i\neq j$.
\end{lem}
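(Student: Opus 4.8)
The setting is $\mathcal{M} = Q \dot\cup C$ (or possibly just $Q$), a compact metric space on which $f$ acts; we want a residual set of $N$-tuples whose backward orbits are pairwise disjoint. Write $\mathcal{O}^-(x) = \{f^{-k}(x) : k \ge 0\}$ — but note $f$ need not be injective here, so $\mathcal{O}^-(x)$ should be read as $\{y : f^k(y) = x \text{ for some } k \ge 0\}$, i.e. the grand backward orbit. The plan is to write the bad set as a countable union of closed nowhere dense sets. For fixed $i \ne j$ and fixed $k, l \in \mathbb{N}_0$, consider
\[
  F_{i,j,k,l} = \{(p_1,\dots,p_N) \in \mathcal{M}^N : f^k(p_i) = f^l(p_j)\}.
\]
This is closed in $\mathcal{M}^N$ by continuity of $f^k$ and $f^l$, and the bad set is exactly $\bigcup_{i\ne j}\bigcup_{k,l} F_{i,j,k,l}$, a countable union. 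So it suffices to show each $F_{i,j,k,l}$ is nowhere dense in $\mathcal{M}^N$; equivalently, since it is closed, that its complement is dense, i.e. that $F_{i,j,k,l}$ has empty interior.

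To see $F_{i,j,k,l}$ has empty interior, fix a point $(p_1,\dots,p_N)$ and an $\varepsilon$-ball around it. Since $i \ne j$, the condition $f^k(p_i) = f^l(p_j)$ involves two independent coordinates. It is enough to perturb $p_i$ alone within its $\varepsilon$-ball to a point $p_i'$ with $f^k(p_i') \ne f^l(p_j)$. This is possible provided the ball $B_\varepsilon(p_i)$ is not entirely contained in the set $(f^k)^{-1}(f^l(p_j))$ — i.e. provided $f^k$ is not locally constant equal to $f^l(p_j)$ near $p_i$. Here is where I would invoke the structure of $\mathcal{M}$: on $Q$, the map $f$ is minimal (item 2 in the list for simple/solenoidal systems), hence $f^k$ is nowhere locally constant on $Q$ — a nonempty open set of $Q$ cannot map to a single point, else by minimality that point would be the whole space, contradicting infinitude of $\Omega$. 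The isolated points $C$ cause no trouble since they are countable: one can even shrink the residual set further to avoid the countable set $\bigcup_i \mathcal{O}^-(C) \cup \mathcal{O}^+(C)$ if needed, or simply note that perturbing into $S$ (dense in $Q$ by Lemma before) suffices. So a perturbation $p_i' \in S$ near $p_i$ with $f^k(p_i') \ne f^l(p_j)$ always exists, showing $F_{i,j,k,l}$ has dense complement.

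The main obstacle is the non-injectivity and non-transitivity of $f$ on all of $\mathcal{M}$: unlike the adding machine case, we cannot just quote that $f$ is a transitive homeomorphism. The argument must instead lean on the minimality of $f|Q$ (to rule out local constancy of iterates) together with density of $S$ in $Q$ (our previous two lemmas) to produce the perturbations, and on the countability of $C$ to dispose of the isolated points. Once each $F_{i,j,k,l}$ is shown closed with empty interior, the Baire Category Theorem gives that $X = \mathcal{M}^N \setminus \bigcup_{i\ne j, k, l} F_{i,j,k,l}$ is residual, completing the proof.
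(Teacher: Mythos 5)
Your Baire-category skeleton is the same as the paper's (cover the bad set by countably many closed sets, show each has dense complement by perturbing one coordinate, conclude by Baire), but your choice of closed sets is genuinely different and makes the key step harder than it needs to be. The paper first observes that if $y\in\mathcal{O}^-(p_i)\cap\mathcal{O}^-(p_j)$ with $f^n(y)=p_i$, $f^m(y)=p_j$ and $n<m$, then $p_j=f^{m-n}(p_i)$: one center lies on the \emph{forward} orbit of the other. Hence the bad set is covered by the sets $A^{i,j}_k=\{(p_1,\dots,p_N):f^k(p_j)=p_i\}$, in which the coordinate $p_i$ appears with no iterate of $f$ applied to it, and density of the complement is immediate for \emph{any} continuous $f$: just move $p_i$ off the single point $f^k(p_j)$. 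Your sets $F_{i,j,k,l}=\{f^k(p_i)=f^l(p_j)\}$ with both $k,l\ge 1$ apply iterates to both coordinates, so perturbing either one requires knowing that $f^k$ is nowhere locally constant; you are then forced to import minimality of $f|Q$ to rule out a locally constant iterate. That auxiliary argument is correct (an open $U\subseteq Q$ with $f^k(U)=\{z\}$ yields a return time $m\ge 1$ with $f^m(z)=z$, contradicting minimality of an infinite set), but it ties the lemma to the structure of $Q$, whereas the paper's version is $f$-agnostic and is reused verbatim for transitive interval maps in the final lemma. Two smaller points. First, the bad set is \emph{contained in}, not equal to, $\bigcup F_{i,j,k,l}$: merging forward orbits does not force the backward orbits to intersect; only the inclusion you actually need is true, so this costs you nothing beyond the extra nowhere-density work. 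Second, your disposal of the isolated points $C$ is shakier than you suggest: each point of $C$ is \emph{open} in $Q\dot{\cup}C$, so a nonempty countable union of them is not meager and cannot simply be excised from a residual set, and near an isolated $p_i$ no perturbation exists at all. The paper's own perturbation step has the same blind spot; both arguments really live on a perfect space, and the application in Theorem \ref{w-limit} only uses tuples from $S^N\subseteq Q^N$, where both proofs are sound.
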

\begin{proof}
	For every $i,j\in\{1,\dots,N\}$ such that $i\neq j$, and for every $k\in\mathbb{N}$, consider the set
	$$A^{i,j}_k=\{(p_1,\dots,p_N)\in\mathcal{M}^N:f^k(p_j)=p_i\}.$$
	Since $f^k$ is continuous, this set is closed.
	Its complement is dense. In fact, given $(p_1,\dots,p_N)\in A^{i,j}_k$ and $\epsilon>0$, consider $(p_1,\dots,p_i',\dots,p_j,\dots,p_N)$ with $p_i'\neq f^k(p_j)$ and $d(p_i,p_i')<\epsilon$.
	Therefore, the set
	$$X=\mathcal{M}^N\setminus\bigg(\bigcup_{\substack{k\in\mathbb{N}\\i,j\in\{1,\dots,N\}\\i\neq j}}A^{i,j}_k\bigg)$$
	is residual and verifies the statement. In fact, suppose $(p_1,\dots,p_N)\in X$ but $y\in\mathcal{O}^-(p_i)\cap\mathcal{O}^-(p_j)\neq\emptyset$. Then, there exists $n,m\in\mathbb{N}$ such that $f^n(y)=p_i$ and $f^m(y)=p_j$. Suppose $n<m$; then, $f^{m-n}(p_i)=p_j$.
\end{proof}

\begin{thm}\label{w-limit}
	Let $f:[0,1]\rightarrow[0,1]$ be a continuous function. Let $\omega(x)$ be a maximal and infinite $\omega$-limit set of genus 1. Then, $f$ restricted to $\omega(x)$ is strachite.
\end{thm}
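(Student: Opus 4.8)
Write $\Omega=\omega(x)$ and keep the notation $K,S,Q,C$ introduced in Section~\ref{intro}. The plan is, first, to use the three preceding lemmas to pin down a residual family of admissible $N$-tuples of centres; then, for a fixed admissible tuple and an arbitrary collection of shrinking radii, to approximate the relevant ``first-visit'' sets \emph{from inside} by genuinely open sets; and finally to prove those open sets dense by steering an arbitrary open set exactly onto an admissible centre along an orbit that avoids the competing holes.

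I would take $\mathcal R=S^{N}\cap X$, where $X\subseteq\Omega^{N}$ is the residual set of Lemma~\ref{disj-orb}. Since $\Omega$, and hence $\Omega^{N}$, is compact and therefore Baire, and $S$ is residual in $\Omega$ by the first lemma above, $S^{N}$ contains a dense $G_\delta$ of $\Omega^{N}$, so $\mathcal R$ is residual in $\Omega^{N}$; crucially it does not depend on the radii. Fix $(p_{1},\dots,p_{N})\in\mathcal R$ and, for each $i$, a decreasing null sequence $\{\rho^{i}_{n}\}_{n}$. As $\mathfrak T=\bigcap_{i=1}^{N}\mathfrak T(i)$ is a finite intersection, it suffices to produce, for each $i$, a dense $G_\delta$ subset of $\mathfrak T(i)$.

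Fix $i$ and $m\in\mathbb N$. The set of points whose orbit at scale $n$ reaches $B^{i}_{n}$ strictly before it reaches any $B^{j}_{n}$ with $j\ne i$ is, in general, only an $F_\sigma$, because the clause ``stay out of the competing holes up to time $k$'' is closed; so I would pass to the smaller sets obtained by forbidding the \emph{closed} competing balls. For $n\ge m$ and $k\ge 0$ put
$$O^{i}_{n,k}=(f^{k})^{-1}(B^{i}_{n})\cap\bigcap_{k'=0}^{k}(f^{k'})^{-1}\Big(\Omega\setminus\bigcup_{j\ne i}\overline{B^{j}_{n}}\Big),$$
which is open in $\Omega$, and let $U^{i}_{m}=\bigcup_{n\ge m}\bigcup_{k\ge 0}O^{i}_{n,k}$. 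Every $x\in O^{i}_{n,k}$ has $f^{k}(x)\in B^{i}_{n}$ and $f^{k'}(x)\notin\overline{B^{j}_{n}}\supseteq B^{j}_{n}$ for all $k'\le k$ and $j\ne i$, so the orbit of $x$ does enter $B^{i}_{n}$ before any competing hole; hence $\bigcap_{m}U^{i}_{m}$ is a $G_\delta$ contained in $\mathfrak T(i)$.

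The heart of the matter is the density of $U^{i}_{m}$ in $\Omega$. Let $W\subseteq\Omega$ be nonempty relatively open. Density of $S$ gives $\emptyset\ne W\cap S\subseteq W\cap Q$, and, because $p_{i}\in S$, the lemma asserting $\overline{\mathcal O^{-}(s)}=Q$ for $s\in S$ furnishes a point $y_{0}\in W\cap Q$ and an integer $k_{0}\ge 1$ with $f^{k_{0}}(y_{0})=p_{i}$. Since $(p_{1},\dots,p_{N})\in X$, no $p_{j}$ with $j\ne i$ appears on the finite orbit segment $y_{0},f(y_{0}),\dots,f^{k_{0}}(y_{0})=p_{i}$: were $f^{l}(y_{0})=p_{j}$ for some $0\le l\le k_{0}$, then $f^{k_{0}-l}(p_{j})=p_{i}$, contradicting the defining property of $X$ when $k_{0}-l\ge 1$ and contradicting $p_{i}\ne p_{j}$ when $l=k_{0}$. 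Because this segment is finite and $\bigcap_{n}\overline{B^{j}_{n}}=\{p_{j}\}$, I can choose $n\ge m$ so large that $\overline{B^{j}_{n}}\cap\{f^{l}(y_{0}):0\le l\le k_{0}\}=\emptyset$ for every $j\ne i$; then $y_{0}\in W\cap O^{i}_{n,k_{0}}$. Thus each $U^{i}_{m}$ is dense open, $\bigcap_{m}U^{i}_{m}$ is a dense $G_\delta\subseteq\mathfrak T(i)$, and $\bigcap_{i=1}^{N}\bigcap_{m}U^{i}_{m}$ is a dense $G_\delta\subseteq\mathfrak T$; so the open dynamical system $(\Omega,d,f,\mathfrak S)$ is completely indecisive for every admissible tuple of centres and every collection of radii, which is exactly the strachite property for $f$ restricted to $\omega(x)$. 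The step I expect to be the obstacle is precisely this density argument: one must give up a little (closed, not open, competing balls) to keep the approximating sets open, yet still be able to aim an arbitrary open set \emph{exactly} at $p_{i}$ — this forces the admissible centres to lie in $S$ and uses backward-orbit density — while keeping the aiming orbit clear of the other holes, which is what the disjointness of the sets $\mathcal O^{-}(p_{i})$ in Lemma~\ref{disj-orb} provides.
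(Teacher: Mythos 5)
Your proof is correct and follows essentially the same route as the paper's: the same residual set $\mathcal{R}=S^{N}\cap X$, the same open approximating sets obtained by deleting preimages of the \emph{closed} competing balls, and the same density argument via the backward orbit of $p_{i}$ combined with the disjointness of backward orbits from Lemma~\ref{disj-orb}. The only differences are cosmetic: you re-derive the inclusion of these open sets into $\mathfrak{T}(i)$, which the paper simply cites from its reference, and you run the separation argument on the finite orbit segment $y_{0},\dots,f^{k_{0}}(y_{0})$ rather than on the full preimage sets $f^{-m}(p_{i})$ and $f^{-h}(p_{j})$.
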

\begin{proof}
	Recall that $\omega(x)$ can be written as $Q\dot{\cup}C$. The set $S^N\subseteq\mathcal{M}^N$ is residual, so that $\mathcal{R}=S^N\cap X$ is residual, too, where $X$ is the residual set found in Lemma \ref{disj-orb}. We now show that for every $(p_1,\dots,p_N)\in\mathcal{R}$, the open dynamical system $(\mathcal{M},d,f,\{p_i, \{\rho^i_n\},  i=1,\dots,N,~n\in\mathbb{N}\})$ is completely indecisive.
	
	Given $(p_1,\dots,p_N)\in\mathcal{R}$, it is enough to show $\mathfrak{T}(i)$ is residual, for every $i=1,\dots,N$. As shown in \cite{dfr},
	$$\mathfrak{T}(i)\supseteq\bigcap_{k\geq 0}\bigcup_{n\geq k}\bigcup_{m\geq0}\bigg(f^{-m}(B^i_n)\setminus\bigcup_{h=0}^m\bigcup_{j\neq i}f^{-h}(\overline{B^j_n})\bigg).$$
	Naming
	$$C_n=\bigcup_{m\geq0}\bigg(f^{-m}(B^i_n)\setminus\bigcup_{h=0}^m\bigcup_{j\neq i}f^{-h}(\overline{B^j_n})\bigg),$$
	it is clear that $\bigcup_{n\geq k}C_n$ is open. We now show it is also dense in $Q$, by verifying that it contains $\mathcal{O}^-(p_i)$. Since $\mathcal{O}^-(p_i)\cap\mathcal{O}^-(p_j)=\emptyset$ for every $j\neq i$, the closed sets $f^{-m}(p_i)$ and $\{p_j\}_{j\neq i}\cup\bigcup_{j\neq i}\bigcup_{h=0}^mf^{-h}(p_j)$ are disjoint. Therefore, for a big enough $n\in\mathbb{N}$, $f^{-m}(p_i)\cap\bigcup_{j\neq i}\bigcup_{h=0}^mf^{-h}(\overline{B^j_n})=\emptyset$.
	
	Thus, for every given $(p_1,\dots,p_N)\in\mathcal{R}$, the set of indecisive points of $Q$ is residual in $Q$, and so, it is residual in $Q\dot{\cup}C$, as $C$ is countable.
\end{proof}
\begin{rem}
	Therefore, even though the system $([0,1],f)$ might not be strachite, or even transitive. Just the presence of an infinite $\omega$-limit set with no cycles assures the existence of a subsystem that verifies the strachite property.
\end{rem}
Assuming more, namely that $f:[0,1]\rightarrow[0,1]$ is transitive and continuous, we can see that the whole system has the strachite property.
\begin{lem}
	Let $f:[0,1]\rightarrow[0,1]$ be continuous and transitive. Then $([0,1],f)$ is strachite.
\end{lem}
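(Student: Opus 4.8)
The plan is to follow the scheme of the proof of Theorem~\ref{w-limit} with $\mathcal M=[0,1]$, the only real change being that the density statement $\overline{\mathcal O^-(s)}=Q$ used there must be replaced by the assertion that backward orbits are dense in the whole interval. Fix $N\in\mathbb N$. Let $X\subseteq[0,1]^N$ be the residual set constructed in the proof of Lemma~\ref{disj-orb}; recall that for $(p_1,\dots,p_N)\in X$ one has $f^k(p_j)\neq p_i$ whenever $i\neq j$ and $k\in\mathbb N$, in particular $\mathcal O^-(p_i)\cap\mathcal O^-(p_j)=\emptyset$. Let $D\subseteq[0,1]^N$ be the (residual) set of tuples with pairwise distinct entries and put $\mathcal R=X\cap D$. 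Since $\mathcal R$ is independent of the radii, it suffices to prove that for every $(p_1,\dots,p_N)\in\mathcal R$ and every choice of decreasing infinitesimal sequences $\{\rho^i_n\}$ the open dynamical system is completely indecisive, and, as in Theorem~\ref{w-limit}, for this it is enough to show $\mathfrak T(i)$ is residual in $[0,1]$ for each $i\in\{1,\dots,N\}$.

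The essential ingredient is: \emph{for every $z\in[0,1]$, the set $\bigcup_{m\ge 1}f^{-m}(z)$ is dense in $[0,1]$}. I would derive this from the classification of transitive interval maps: a continuous transitive $f\colon[0,1]\to[0,1]$ is either topologically mixing, or admits a fixed point $c\in(0,1)$ with, writing $J_1=[0,c]$ and $J_2=[c,1]$, $f(J_1)=J_2$, $f(J_2)=J_1$ and $f^2\vert J_\ell$ topologically mixing on $J_\ell$ for $\ell=1,2$; and a topologically mixing interval map is locally eventually onto (every nonempty open subset of the space on which it acts has some forward iterate equal to that whole space). Given a nonempty open $U\subseteq[0,1]$, choose a nonempty open interval $V\subseteq U$ contained in the interior of $J_1$ or of $J_2$ (in the mixing case just take $V\subseteq U$). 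Applying the locally-eventually-onto property to $V$ — under $f$ in the mixing case, under $f^2\vert J_\ell$ in the other, followed if necessary by one more application of $f$ to land in the half containing $z$ — one obtains $m\ge 1$ with $z\in f^m(V)\subseteq f^m(U)$, hence $f^{-m}(z)\cap U\neq\emptyset$. This appeal to the structure theory of transitive interval maps, rather than to transitivity alone, is the step I expect to be the real obstacle.

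Granting this, fix $(p_1,\dots,p_N)\in\mathcal R$, a choice of radii, and an index $i$. By \cite{dfr},
\[
\mathfrak T(i)\ \supseteq\ \bigcap_{k\ge 0}\bigcup_{n\ge k}C_n,
\qquad
C_n=\bigcup_{m\ge 0}\Bigl(f^{-m}(B^i_n)\setminus\bigcup_{h=0}^{m}\bigcup_{j\neq i}f^{-h}(\overline{B^j_n})\Bigr),
\]
and, exactly as in the proof of Theorem~\ref{w-limit}, each set $\bigcup_{n\ge k}C_n$ is open. I would show it contains $\bigcup_{m\ge 1}f^{-m}(p_i)$ — dense by the previous paragraph — so that every $\bigcup_{n\ge k}C_n$ is dense and open and $\mathfrak T(i)\supseteq\bigcap_{k}\bigcup_{n\ge k}C_n$ is residual. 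Take $y$ with $f^m(y)=p_i$ for some $m\ge 1$. For every $h$ with $0\le h\le m$ and every $j\neq i$ one has $f^h(y)\neq p_j$: if $h\ge 1$ then $y\in f^{-h}(p_j)\cap f^{-m}(p_i)\subseteq\mathcal O^-(p_j)\cap\mathcal O^-(p_i)$, which is empty on $X$; if $h=0$ then $y=p_j$, so $f^m(p_j)=p_i$ with $m\in\mathbb N$ and $j\neq i$, contradicting $(p_1,\dots,p_N)\in X$. Hence $d(f^h(y),p_j)>0$ for all such $h,j$, so for $n$ large enough $f^h(y)\notin\overline{B^j_n}$ for all $h\le m$ and $j\neq i$, which yields $y\in f^{-m}(B^i_n)\setminus\bigcup_{h=0}^{m}\bigcup_{j\neq i}f^{-h}(\overline{B^j_n})\subseteq C_n$, hence $y\in\bigcup_{n\ge k}C_n$ for every $k$. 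This proves each $\mathfrak T(i)$ residual; therefore $\mathfrak T=\bigcap_{i=1}^{N}\mathfrak T(i)$ is residual, the system is completely indecisive for every $(p_1,\dots,p_N)\in\mathcal R$, and $([0,1],f)$ is strachite.
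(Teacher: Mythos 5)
Your reduction to the density of backward orbits, and the verification that $\bigcup_{n\ge k}C_n$ contains $\bigcup_{m\ge 1}f^{-m}(p_i)$ on the set $X$ of Lemma~\ref{disj-orb}, match the paper's strategy. But your ``essential ingredient'' --- that \emph{every} $z\in[0,1]$ has $\bigcup_{m\ge1}f^{-m}(z)$ dense --- is false, and the step you rely on to get it (``a topologically mixing interval map is locally eventually onto'') is not a theorem. What is true is that for a mixing interval map and any nondegenerate $J$ one has $f^n(J)\supseteq[\epsilon,1-\epsilon]$ for all large $n$ and every $\epsilon>0$; the endpoints can be missed forever. Concretely, set $a_n=2^{-n}$ and build a continuous $f$ with $f(0)=0$, $f(a_n)=a_{n+1}$, and a single ``tent'' on each $[a_{n+1},a_n]$ rising to height $a_{n-1}$ (height $1$ on the first two blocks). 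Then every branch is uniformly expanding, $f^n(U)$ eventually contains some $[a_K,1]$ and thereafter grows through $[a_{K+j},1]\uparrow(0,1]$, so $f$ is topologically mixing; yet $f(x)>0$ for all $x>0$, whence $\bigcup_m f^{-m}(0)=\{0\}$ is not dense and no open set is ever mapped onto $[0,1]$. Since your residual set $\mathcal R=X\cap D$ does not exclude tuples with a coordinate equal to such an exceptional point (e.g.\ $(0,q)\in\mathcal R$ for suitable $q$ in this example), the density of $\bigcup_{n\ge k}C_n$ fails for those tuples and the proof as written breaks at its central step.

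The gap is repairable: in the mixing case the exceptional points lie in $\{0,1\}$, and in the non-mixing case (where $f$ swaps $[0,c]$ and $[c,1]$) in $\{0,c,1\}$, so you could excise the closed nowhere dense set of tuples meeting these finitely many points and invoke the $[\epsilon,1-\epsilon]$-covering property instead of exactness. But note that the paper avoids the structure theory of transitive interval maps altogether: it proves directly, from transitivity plus the observation that $f^k(B_n)\subseteq\overline{\mathrm{Int}(f^k(B_n))}$ for basic open intervals $B_n$ (since $f$ cannot collapse an interval to a point without creating a periodic transitive point), that $V=\bigcap_n\bigcup_{k\ge0}f^k(B_n)$ --- the set of points with dense backward orbit --- is residual, and then takes $\mathcal R$ inside $V^N\cap X$. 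That Baire-category route gives exactly the density you need, only for a residual set of centers, which is all the definition of strachite requires.
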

\begin{proof}
	It is enough to show that the set of points with dense backward orbits is residual. Then, the proof follows the proof of Lemma \ref{disj-orb} and the proof of Theorem \ref{w-limit}. First of all, for every open interval $I\subseteq [0,1]$, we have that $f(I)\subseteq \overline{\text{Int}(f(I))}$. This follows from the fact that $f(I)$ cannot be a single point and so it must be an interval. In fact, let $x\in[0,1]$ be a transitive point and let $I\subseteq[0,1]$ be open such that $f(I)=\{p\}$. Then $x$ visits $I$ twice, and so, there exist $m,n\in\mathbb{N}$ such that $f^m(x)=f^n(x)=p$. So $x$ is periodic, which is a contradiction.
	
	Let $\{B_n\}_{n\in\mathbb{N}}$ be a countable basis for the topology consisting of connected open sets. For the same reason as before, for every $n,k\in\mathbb{N}$, $f^k(B_n)\subseteq\overline{\text{Int}(f^k(B_n))}$.
	 For every $n\in\mathbb{N}$, set
	$$V_n=\bigcup_{k\geq0}f^k(B_n)$$and
	$$V=\bigcap_{n\in\mathbb{N}}V_n.$$
	Since $f$ is transitive, for every opens sets $U,V\subseteq[0,1]$, there exists a natural number $k\in\mathbb{N}$ such that $U\cap f^k(V)\neq\emptyset$ (see \cite{kurka2003topological}); therefore, $V_n$ is dense in $[0,1]$. Since for every $n,k\in\mathbb{N}$, $f^k(B_n)\subseteq\overline{\text{Int}(f^k(B_n))}$, for every $n\in\mathbb{N}$, $V_n$ contains an open subset which is dense in $V_n$. Thus, $V$ is residual.
	Furthermore, $x\in V$ if and only, for every $n\in\mathbb{N}$, there exists $k\in\mathbb{N}$ such that $x\in f^k(B_n)$, which means $\overline{\mathcal{O}^-(x)}=[0,1]$.
\end{proof}

\bibliographystyle{plain}
\bibliography{Adding_machine_and_open_dynamical_systems}

\end{document}